\theoremstyle{plain}
\newtheorem{proposition}{Proposition}[section]
\newtheorem{theorem}[proposition]{Theorem}
\newtheorem{lemma}[proposition]{Lemma}
\newtheorem{corollary}[proposition]{Corollary}
\theoremstyle{definition}
\newtheorem{definition}[proposition]{Definition}
\theoremstyle{remark}
\newtheorem{remark}[proposition]{Remark}
\DeclareMathOperator{\Conv}{Conv}
\DeclareMathOperator{\Isom}{Isom}
\newcommand{\bd}{\partial}
\newcommand{\action}{\curvearrowright}
\newcommand{\R}{\mathbb{R}}                          
\newcommand{\N}{\mathbb{N}}                          
\DeclareMathOperator{\dd}{d\!}
\newcommand{\SG}{\mathcal{S}(G)}
\begin{document}
\title[Geometric density for IRS]{Geometric Density for invariant random subgroups\\ of groups acting on CAT(0) spaces}
\author{Bruno Duchesne, Yair Glasner, Nir Lazarovich, Jean L\'ecureux}
\date{\today}

\begin{abstract}  We prove that an IRS of a group with a geometrically dense action on a CAT(0) space also acts geometrically densely; assuming the space is either of finite telescopic dimension or locally compact with finite dimensional Tits boundary. This can be thought of as a Borel density theorem for IRSs.
\end{abstract} 
\maketitle

\section{Introduction}

\subsection{Invariant random subgroups}Let $G$ be a locally compact second countable group. We denote by $\mathcal{S}(G)$ the space of closed subgroups of $G$. We endow it with the Chabauty topology. With this topology $\SG$ is a compact metrizable space \cite[Propositions 1.7\&1.8]{MR2406240}. Recently, a new and fruitful point of view about non-free probability measure preserving (shortly p.m.p) actions has appeared (see \cite{AGV:Kesten_IRS}) and is currently a fast growing field of research.

\begin{definition}An \emph{invariant random subgroup} (shortly IRS) is a Borel probability measure on $\SG$ which is invariant under the adjoint action of $G$ on $\SG$ by conjugations. 
\end{definition}

We say that an IRS is \emph{not trivial}, if almost surely it is not the trivial group (i.e. $\mu(\{e\})=0)$). We emphasize that this convention is not widely used (for example it is different from the one in \cite{Tucker-Drob:2012vn}) but it is more convenient for us and there will be no ambiguity in case of an ergodic IRS.

\subsection{Geometric density}

In \cite{MR2574741}, P.-E. Caprace and N. Monod proved a density theorem in the spirit of Borel density theorem for groups acting  on CAT(0) spaces. Let $X$ be a CAT(0) space.

\begin{definition}
The action of a subgroup $H<\Isom(X)$ is called \emph{minimal} if  it does not stabilize a strict closed convex subset of $X$. It is called \emph{geometrically dense} if it is minimal and if $H$ does not fix a point in $\bd X$.
\end{definition}

Caprace and Monod proved that if $G$ acts continuously and geometrically densely on a proper CAT(0) space with finite dimensional Tits boundary, then the same holds for  closed subgroups of finite covolume \cite[Theorem 2.4]{MR2574741}. This result implies Borel density in case $G$ is a semi-simple algebraic group over a local field without anisotropic factor \cite[Proposition 2.8]{MR2574741}. They also proved, under the same assumptions, that every action of a normal subgroup is, too, geometrically dense \cite[Theorem 1.10]{MR2574740}. Since IRSs generalize both subgroups of finite covolume and normal subgroups, our theorem below subsumes both theorems of Caprace-Monod.

\begin{theorem}\label{IRSdense}Assume $X$ has finite telescopic dimension or is proper with finite dimensional Tits boundary, is irreducible and not the real line.
If $G$ acts faithfully, continuously and geometrically densely by isometries on $X$ then every non trivial IRS of $G$ is also geometrically dense.
\end{theorem}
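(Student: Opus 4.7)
The plan is to reduce to the case of an ergodic IRS $\mu$ via ergodic decomposition, and then verify the two defining properties of geometric density -- no fixed point at infinity and minimality -- separately. Each is a $G$-invariant Borel condition on $H \in \SG$, so by ergodicity each holds either $\mu$-a.s.\ or $\mu$-a.s.\ fails; in each failure case the strategy is the same: I build a canonical $G$-equivariant Borel assignment $H \mapsto \Phi(H)$ into some $G$-space, push $\mu$ forward to a $G$-invariant probability measure there, and derive a contradiction with the geometric density of $G$.

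\emph{Fixed points at infinity.} Suppose $\mu$-a.s.\ $\Fix_{\bd X}(H) \neq \emptyset$. The assignment $H \mapsto \Fix_{\bd X}(H)$ is $G$-equivariant and closed-valued in the Tits boundary, and its Tits circumradius is a $G$-invariant Borel function of $H$, hence $\mu$-a.s.\ equal to a constant $r \in [0,\pi]$. If $r < \pi/2$, the Tits circumcenter gives a canonical Borel $G$-equivariant map $\xi : \SG \to \bd X$, and the pushforward $\xi_\ast \mu$ is a $G$-invariant probability measure on $\bd X$; using the finite-dimensional Tits boundary, irreducibility of $X$, and $X \neq \R$, one argues that such an invariant measure forces a $G$-fixed point at infinity, contradicting geometric density. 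If $r \geq \pi/2$, I would invoke Caprace--Lytchak's structure theory for finite-dimensional $\mathrm{CAT}(1)$ spaces: the fixed set then contains a canonical equivariant substructure (polar pairs, or a spherical join factor), which, combined with irreducibility and $X \neq \R$, ultimately forces a $G$-fixed subspace of $\bd X$ or a $G$-equivariant splitting of $X$.

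\emph{Minimality.} Suppose $\mu$-a.s.\ $H$ preserves a proper closed convex subset of $X$. Using finite telescopic dimension (or properness plus finite-dimensional Tits boundary) I select, in a $G$-equivariant Borel way, a canonical proper closed convex $H$-invariant subset $C(H) \subsetneq X$ (e.g.\ a minimal one). Fix a basepoint $x_0 \in X$ and set
\[
f(x) = \int_{\SG} \bigl( d(x, C(H)) - d(x_0, C(H)) \bigr) \, d\mu(H).
\]
The integrand is $1$-Lipschitz in $x$ and bounded in absolute value by $d(x, x_0)$, so $f$ is finite, continuous, convex (as an average of convex functions), and $G$-invariant by the $G$-invariance of $\mu$ together with the equivariance of $C$. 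Since $G$ acts minimally on $X$, every sublevel set of this convex $G$-invariant function is empty or all of $X$; given $f(x_0) = 0$, this forces $f \equiv 0$. Analysing this identity along geodesics from $x_0$, and using the convexity of each integrand, forces $\mu$-a.s.\ the function $t \mapsto d(\gamma(t), C(H))$ to be affine -- hence constant by non-negativity. A parallel-set / de Rham type argument together with the irreducibility of $X$ then yields $C(H) = X$ $\mu$-a.s., contradicting the assumption.

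\emph{Main obstacle.} The most delicate step is the large-circumradius case $r \geq \pi/2$ in the fixed-point analysis, where Caprace--Lytchak's structure theory of finite-dimensional $\mathrm{CAT}(1)$ spaces is indispensable; this is the genuinely new difficulty beyond Caprace--Monod's normal-subgroup argument. Borel measurability of the canonical choices (Tits circumcenters, minimal invariant convex subsets) is technical but standard. Finally, the hypotheses that $X$ is irreducible and not the real line are used precisely to rule out one-dimensional Euclidean factors, on which a subgroup of translations would yield an IRS fixing the two endpoints at infinity.
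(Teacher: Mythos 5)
Your minimality step has two genuine gaps. First, the ``canonical proper closed convex $H$-invariant subset $C(H)$ (e.g.\ a minimal one)'' need not exist and its selection is not a routine technicality: minimal $H$-invariant convex sets can fail to exist altogether (this is the case the paper detects via $\varphi_H\equiv+\infty$, where instead one only gets a canonical $H$-fixed point at infinity), and when they do exist they form a family of pairwise parallel sets with no distinguished member, so no equivariant choice of a single one is possible. The only canonical object is the union $Z_H$ of all minimal sets, and establishing that $H\mapsto d(x,Z_H)$ is measurable, that $Z_H\neq\emptyset$ almost surely, and that one may then average, is exactly the technical core of the paper (the dichotomy theorem for measurable fields of CAT(0) spaces, the auxiliary function $\varphi_H$, and Lemmas \ref{varphimin}--\ref{mini}); calling this ``standard'' hides the actual content. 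A telling symptom: your argument never uses non-triviality of the IRS nor faithfulness of $G\action X$, yet if it were correct it would apply verbatim to $\delta_{\{e\}}$ and show that the trivial group acts minimally; for $H=\{e\}$ every point is a proper invariant convex subset but no equivariant selection exists, and this is precisely where those hypotheses must enter (in the paper: $M_H$ a point forces $H=\{e\}$). Second, your averaged function $f$ is \emph{not} $G$-invariant: since the normalizing term $d(x_0,C(H))$ does not transform trivially, invariance of $\mu$ only gives the cocycle identity $f(gx)=f(x)+f(gx_0)$ (this is the paper's Proposition \ref{affine}). Sublevel sets are therefore only permuted, and one must invoke the full geometric density of $G$: if the infimum of $f$ is not attained, the filtering family of sublevel sets yields a canonical $G$-fixed point at infinity (here the finite telescopic dimension or properness hypotheses are used), and only after excluding this does one get that $g\mapsto f(gx_0)$ is a trivial homomorphism and $f$ is constant. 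Also ``affine and nonnegative hence constant'' is false in a general CAT(0) space (geodesics need not extend); what is needed is triviality of the Euclidean factor together with the cited results that affine functions are then constant.

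The fixed-point-at-infinity half is where your route fails to close: the case $r\geq\pi/2$ is left as a gesture towards Caprace--Lytchak, and it is not clear it can be completed---from the random sets $\Fix_{\bd X}(H)$ of large circumradius there is no evident canonical $G$-invariant point, splitting or measure, and no off-the-shelf theorem produces one. The paper sidesteps the boundary analysis entirely by a short trick you missed: having shown that almost every $H$ acts minimally, push $\mu$ forward under $H\mapsto\overline{[H,H]}$; this is again an IRS, so almost surely $\overline{[H,H]}$ acts minimally, whereas a fixed point $\xi\in\bd X$ of $H$ would force $\overline{[H,H]}$ to preserve every horoball centered at $\xi$, a contradiction. (Even in your $r<\pi/2$ case the paper is more economical: a measurable $G$-equivariant map $\SG\to\bd X$ is ruled out directly by averaging the associated Busemann functions via Proposition \ref{affine}, with no appeal to a fixed-point theorem for invariant measures on the boundary.) In sum, the proposal replaces the paper's two real difficulties---measurable equivariant selection via fields of CAT(0) spaces, and the quasi-invariance issue in the averaging---by assertions, and leaves its own hardest case open.
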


In case, the space is not irreducible, one may also obtain geometric density under the assumption that the IRS does not act trivially on any irreducible factor. This hypothesis is in particular weaker than the hypothesis of irreducibility (and non triviality) that already appeared in \cite[\S4]{Abert:2012qy}. Observe that in case $n=1$, Theorem \ref{product case} reduces to Theorem \ref{IRSdense}.


\begin{theorem}\label{product case}Let $G_1,\dots,G_n$ be l.c.s.c groups such that each one acts continuously faithfully geometrically densely by isometries on some CAT(0) space $X_i$ which is irreducible ($\neq \R$) with finite telescopic dimension or locally compact with finite dimensional Tits boundary.  Let $G=G_1\times\dots\times G_n$ and $X=X_1\times\dots\times X_n$. Let $\mu$ be an IRS of $G$ and $\mu_i$ be the pushforward of $\mu$ by the projection $G\to G_i$. Assume that for every $i$, the IRS $\mu_i$ of $G_i$ is not trivial. Then $\mu$-almost every $H\in\SG$ acts geometrically densely on $X$.
\end{theorem}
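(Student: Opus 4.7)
The proof reduces the product situation to the irreducible one handled by Theorem \ref{IRSdense}, factor by factor, and then recombines the information using the product structure of $X$. The serious difficulty is minimality on $X$: projection-minimality alone is insufficient (the diagonal $\{(g,g)\}\le G_1\times G_1$ is a counterexample), so the conjugation-invariance of $\mu$ has to really be used.

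First, for each $i$, the map $\SG\to\mathcal{S}(G_i)$, $H\mapsto\overline{\pi_i(H)}$, is Borel and equivariant under conjugation by $G_i$ (embedded as a factor of $G$), and its pushforward of $\mu$ is the given $\mu_i$. By hypothesis $\mu_i$ is nontrivial and $G_i \action X_i$ satisfies all the assumptions of Theorem \ref{IRSdense}, so for $\mu$-almost every $H$ the closure $\overline{\pi_i(H)}$ acts geometrically densely on $X_i$, for every $i$.

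Fixed points at infinity are then ruled out via the spherical join $\partial X=\partial X_1*\cdots*\partial X_n$: every $\xi\in\partial X$ has coordinates $(\lambda_i,\xi_i)$ with $\lambda_i\ge 0$, $\sum \lambda_i^{2}=1$ and $\xi_i\in\partial X_i$ whenever $\lambda_i>0$, and a product isometry fixes $\xi$ iff its $i$th coordinate fixes $\xi_i$ whenever $\lambda_i>0$. If $H$ fixed such a $\xi$, some projection $\overline{\pi_i(H)}$ would fix $\xi_i\in\partial X_i$, contradicting the preceding paragraph; since the $\lambda_i$ cannot all vanish, no such $\xi$ exists.

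For minimality, assume for contradiction that on a positive $\mu$-measure set, $H$ stabilizes a proper closed convex subset of $X$. Since $H$ has no fixed point at infinity and the dimensional hypothesis on $X$ is in force, Caprace-Monod-style existence results produce minimal nonempty $H$-invariant closed convex subsets $M_H\subsetneq X$; a Borel selection gives such $M_H$ measurably in $H$, and averaging over the conjugation action of $G$ on the space of pairs $(H,Y)$ with $Y$ a proper $H$-invariant closed convex set promotes this to an honest $G$-equivariant random subset, equivalently a $G$-invariant Borel probability measure $\nu$ on the Chabauty space of proper closed convex subsets of $X$. Pushing $\nu$ to each factor via $Y\mapsto\overline{\pi_i(Y)}$ and using the minimality of $\overline{\pi_i(H)}$ from the first paragraph forces $\overline{\pi_i(Y)}=X_i$ a.s., so $\nu$-almost every $Y$ projects densely onto every factor. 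The remaining task is to rule out this skew configuration using that $\mu$ is invariant under conjugation by \emph{all} of $G$ (not merely by a diagonal subgroup); the plan is to leverage the factor-wise geometric density of Step~1 together with a variant of the Caprace-Monod geometric density theorem for normal subgroups, \cite[Theorem 1.10]{MR2574740}, to conclude $Y=X$. The main obstacle is precisely this last step: making the selection $H\mapsto M_H$ genuinely $G$-equivariant despite the possible nonuniqueness of minimal invariant subsets, and turning the fact that $\mu$ sees all of $G$ into a clean geometric contradiction on the product.
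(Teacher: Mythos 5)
Your first two steps coincide with the paper's argument and are correct: applying Theorem \ref{IRSdense} to each pushforward $\mu_i$ gives that $\overline{\pi_i(H)}$ acts geometrically densely on $X_i$ for a.e.\ $H$, and the join decomposition $\partial X=\partial X_1*\dots*\partial X_n$ then rules out fixed points of $H$ at infinity. The genuine gap is the minimality step, and you acknowledge it yourself: you never produce the $G$-equivariant measurable assignment $H\mapsto M_H$ (minimal invariant subsets are badly non-unique, and ``averaging over the conjugation action'' does not create equivariance --- equivariance is exactly what you would need as input to push the measure around), and even if you were handed a $G$-invariant measure $\nu$ on proper closed convex subsets whose projections to every factor are dense, you have no mechanism to exclude such skew sets; appealing to ``a variant of'' \cite[Theorem 1.10]{MR2574740} is not an argument, since $H$ is not normal and excluding precisely this configuration is the entire content of the theorem you are trying to prove. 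So as written, minimality is a plan, not a proof.

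The idea you are missing is to work with a \emph{canonical} object instead of a selected one: take $Z_H$ to be the union of all $H$-minimal subsets. It is nonempty a.s.\ (because $H$ has no fixed point at infinity), it is measurable in $H$ (Lemmas \ref{varphi} and \ref{mini}) and it is automatically $G$-equivariant, so the selection problem evaporates. One then shows $Z_H=X$ by induction on $n$: since $\overline{\pi_n(H)}$ acts minimally on $X_n$, the set $Z_H$ projects onto $X_n$; for a fixed $x\in X_n$ the fiber $Z_H^x\subseteq X_1\times\dots\times X_{n-1}$ depends measurably and $\widehat{G_n}$-equivariantly on $H$, where $\widehat{G_n}=G_1\times\dots\times G_{n-1}$, so Lemma \ref{convexfield} (applied to the p.m.p., not necessarily ergodic, action of $\widehat{G_n}$ on $(\SG,\mu)$ and to its geometrically dense action on $X_1\times\dots\times X_{n-1}$; Proposition \ref{affine} needs no ergodicity) forces $Z_H^x=X_1\times\dots\times X_{n-1}$, hence $Z_H=X$. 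Finally $Z_H\simeq M_H\times T_H$ with $H$ acting minimally on $M_H$ and trivially on $T_H$; by uniqueness of the decomposition of $X$ into irreducible factors and ergodicity, $M_H$ and $T_H$ are complementary sub-products of the $X_i$ independent of $H$, and triviality of the $H$-action on each $X_i$-factor of $T_H$ together with faithfulness of $G_i\action X_i$ and nontriviality of $\mu_i$ forces $T_H$ to be a point, i.e.\ $H$ acts minimally. This canonical-object-plus-fibering argument is precisely the content your proposal lacks.
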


In this particular situation (when $G$ acts geometrically densely on some nice CAT(0) space), one recovers  the result \cite{BDL} that an amenable IRS lies in the amenable radical, which is trivial in this situation. The paper \cite{BDL} was actually obtained after proving the following.

\begin{corollary}\label{amenable}
With the same assumptions as in Theorem \ref{product case}, any IRS of $G$ with non trivial projections has a trivial amenable radical.\end{corollary}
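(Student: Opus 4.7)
The plan is to combine Theorem \ref{product case} with the Caprace-Monod normal-subgroup theorem on geometric density and with an Adams-Ballmann type theorem ruling out amenable geometrically dense actions. Because the amenable radical is a characteristic subgroup, the map $H \mapsto \mathrm{R}_{\mathrm{a}}(H)$ is conjugation-equivariant; it therefore suffices to show that $\mathrm{R}_{\mathrm{a}}(H) = \{e\}$ for $\mu$-almost every $H \in \SG$.

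By Theorem \ref{product case}, on a $\mu$-conull set the closed subgroup $H<G$ acts geometrically densely on $X = X_1 \times \dots \times X_n$; I fix such an $H$ and argue pointwise. Its amenable radical $\mathrm{R}_{\mathrm{a}}(H)$ is a closed normal subgroup of $H$, and since $H$ is l.c.s.c.\ and acts continuously on the CAT(0) space $X$ (which satisfies the standing hypotheses), the Caprace-Monod normal-subgroup theorem (\cite[Theorem 1.10]{MR2574740} and its extension to the finite telescopic dimension setting) applied with $H$ in the role of the ambient group gives that $\mathrm{R}_{\mathrm{a}}(H)$ is either trivial or itself acts geometrically densely on $X$.

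To eliminate the second alternative I would invoke an Adams-Ballmann type theorem: an amenable group acting by isometries on a CAT(0) space of finite telescopic dimension (or a proper CAT(0) space with finite-dimensional Tits boundary) either fixes a point of $\partial X$ or stabilizes a non-empty Euclidean flat in $X$. Geometric density rules out fixing a point at infinity, so $\mathrm{R}_{\mathrm{a}}(H)$ would stabilize a flat $F \subseteq X$; minimality then forces $F = X$. But $X = \prod X_i$ with each $X_i$ irreducible and not isometric to $\R$, so no $X_i$---and hence $X$ itself---is a Euclidean flat, a contradiction. This forces $\mathrm{R}_{\mathrm{a}}(H) = \{e\}$ on the conull set, as required.

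The main technical point is to confirm that the versions of both the Caprace-Monod normal-subgroup theorem and the Adams-Ballmann theorem we need are available under each of the two geometric settings of Theorem \ref{product case}. Both extensions to finite telescopic dimension are in the literature (Caprace-Monod and Caprace-Lytchak), so this reduces to citing the appropriate statement in each case; the geometric input itself has already done all the work, via Theorem \ref{product case}.
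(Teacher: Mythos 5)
Your overall strategy is the paper's: Theorem \ref{product case} gives geometric density of $\mu$-almost every $H$, and the corollary then reduces to the fact that, $X$ having trivial Euclidean factor, no amenable group can act geometrically densely on $X$ (Adams--Ballmann in the form of \cite[Theorem 1.6]{MR2558883}) --- that is exactly the paper's one-line proof. The genuine problem is your intermediate step. The dichotomy you invoke, ``a normal subgroup of a geometrically dense group is trivial or itself geometrically dense on $X$'', is false over a reducible space, and \cite[Theorem 1.10]{MR2574740} (like Theorem \ref{IRSdense}) carries an irreducibility hypothesis for precisely this reason: if $H=H_1\times H_2$ acts geometrically densely on $X_1\times X_2$, the non-trivial normal subgroup $H_1\times\{e\}$ is neither trivial nor geometrically dense, since it stabilizes every slice $X_1\times\{x_2\}$ and fixes $\partial X_2$ pointwise. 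So for $n\geq 2$ the theorem you cite does not apply to $H$ acting on the product $X=X_1\times\dots\times X_n$; this is the very product subtlety that separates Theorem \ref{product case} from Theorem \ref{IRSdense}, and your argument is complete only in the case $n=1$.

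The gap is repairable factor-wise, staying inside the paper's toolkit. For each $i$, the projection $p_i(H)$ acts geometrically densely on $X_i$: any closed convex $p_i(H)$-invariant subset $C_i\subseteq X_i$ yields the $H$-invariant set $X_1\times\dots\times C_i\times\dots\times X_n$, which equals $X$ by minimality of $H$, and a $p_i(H)$-fixed point of $\partial X_i$ would give an $H$-fixed point in $\partial X$. Now $K_i=\overline{p_i(\mathrm{R}_{\mathrm{a}}(H))}$ is an amenable closed normal subgroup of $\overline{p_i(H)}$, so $\delta_{K_i}$ is an IRS of the l.c.s.c.\ group $\overline{p_i(H)}$, which acts faithfully, continuously and geometrically densely on the irreducible space $X_i\neq\R$. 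If $K_i\neq\{e\}$, Theorem \ref{IRSdense} (now legitimately applicable; this also removes your worry about a finite-telescopic-dimension version of \cite[Theorem 1.10]{MR2574740}, since Theorem \ref{IRSdense} covers both geometric settings) makes $K_i$ geometrically dense on $X_i$, contradicting \cite[Theorem 1.6]{MR2558883} because $X_i$ is not a flat. Hence $p_i(\mathrm{R}_{\mathrm{a}}(H))=\{e\}$ for every $i$, and therefore $\mathrm{R}_{\mathrm{a}}(H)=\{e\}$ almost surely, as required.
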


\begin{proof} Since $X$ has a trivial Euclidean factor, there is no amenable group acting geometrically densely on $X$ \cite[Theorem 1.6]{MR2558883}. 
\end{proof}


\subsection{IRSs in linear groups and Borel density theorem} 
The paper \cite{Gelander:2014kq} initiates a systemic study of IRSs in linear groups. In particular a Borel density theorem for IRS in countable linear groups (over any field) was obtained \cite[Theorem A.1]{Gelander:2014kq}. As a corollary of Theorem \ref{product case}, we get a Borel density theorem for IRSs in semisimple groups over local fields. Such a theorem was known by experts, but we include it since it is an easy applications of our previous theorem.

Let us begin with  definitions about semisimple algebraic groups and their subgroups. Let $k$ be a local field and assume $ G$ is the $k$-points of a connected semisimple $k$-group ${\bf G}$. There is an adjoint $k$-group $\overline{\bf{G}}$ which decomposes as $\mathbf{G}_1\times\dots\times {\mathbf{G}}_n$ where each $\mathbf{G}_i$ is an adjoint simple $k$-group and there is a $k$-isogeny $\overline{p}\colon \mathbf{G}\to \overline{\bf{G}}$. Moreover a subgroup of $H\leq G$ is Zariski-dense as soon as $\pi(H)$ is Zariski-dense.  

\begin{definition}Let $H$ be a subgroup of $G$. We say that $H$ \emph{has non-trivial projections} if for all $i\in\{1,\dots,n\}$, $\pi_i(\overline{p}(H))\neq\{e\}$ where $\pi_i$ is the projection $\mathbf{G}\to\mathbf{G}_i$. An IRS of $G$ has non-trivial projections if almost surely, any $H\in\SG$ has non-trivial projections.\end{definition}

\begin{theorem}\label{Zariski density}Let $k$ be a local field and let ${G}$ be  the $k$-points of a $k$-isotropic semisimple algebraic $k$-group $\mathbf{G}$. Then any IRS of $G$ with non-trivial projections is Zariski-dense.
\end{theorem}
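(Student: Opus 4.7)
The plan is to deduce Theorem~\ref{Zariski density} from Theorem~\ref{product case} by equipping each simple factor of $\overline{\mathbf{G}}$ with its natural CAT(0) space. For each $i$, let $X_i$ be the Riemannian symmetric space of non-compact type attached to $\mathbf{G}_i(k)$ when $k$ is archimedean, and the Bruhat--Tits building of $\mathbf{G}_i(k)$ when $k$ is non-archimedean. Since $\mathbf{G}_i$ is $k$-simple and $k$-isotropic, $X_i$ is irreducible, is not the real line, is locally compact, and has finite-dimensional Tits boundary; the group $\mathbf{G}_i(k)$ acts on $X_i$ continuously, isometrically, faithfully (because $\mathbf{G}_i$ is adjoint), and geometrically densely. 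Setting $X := X_1 \times \dots \times X_n$, the product $\overline{\mathbf{G}}(k) = \prod_i \mathbf{G}_i(k)$ acts on $X$ in a manner satisfying all the hypotheses of Theorem~\ref{product case}.

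The IRS $\mu$ of $G$ must then be transported to an IRS of $\overline{\mathbf{G}}(k)$. The $k$-isogeny $\overline{p} \colon G \to \overline{\mathbf{G}}(k)$ has image which is an open, normal, finite-index subgroup of $\overline{\mathbf{G}}(k)$, the cokernel being the finite group $H^1(k, \ker \overline{p})$. Let $\tilde{\mu} := \overline{p}_* \mu$; this is a Borel probability measure on $\mathcal{S}(\overline{\mathbf{G}}(k))$ invariant under conjugation by $\overline{p}(G)$. Averaging its conjugates over coset representatives of $\overline{\mathbf{G}}(k)/\overline{p}(G)$ yields an IRS $\tilde{\mu}_{\mathrm{avg}}$ of $\overline{\mathbf{G}}(k)$, and the non-trivial projections hypothesis on $\mu$ implies that each marginal $(\pi_i)_* \tilde{\mu}_{\mathrm{avg}}$ on $\mathbf{G}_i(k)$ is non-trivial.

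Theorem~\ref{product case} then delivers that $\tilde{\mu}_{\mathrm{avg}}$-almost every closed subgroup of $\overline{\mathbf{G}}(k)$ acts geometrically densely on $X$. Because $\tilde{\mu}$ is one of the finitely many summands defining $\tilde{\mu}_{\mathrm{avg}}$, the same conclusion holds $\tilde{\mu}$-almost surely, so $\overline{p}(H)$ acts geometrically densely on $X$ for $\mu$-almost every $H \in \SG$. To conclude, it suffices to observe that any subgroup $L \leq \overline{\mathbf{G}}(k)$ acting geometrically densely on $X$ is necessarily Zariski-dense in $\overline{\mathbf{G}}$: a proper Zariski-closed $k$-subgroup would lie either in a proper parabolic $k$-subgroup (fixing a point of the Tits boundary of $X$) or in a proper reductive $k$-subgroup of strictly smaller semisimple rank (stabilizing a strict totally geodesic convex subspace of $X$), contradicting geometric density in either case. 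Applied to $L = \overline{p}(H)$ and combined with the remark recalled before the statement that Zariski density descends from $\overline{p}(H)$ to $H$, this finishes the proof. The least routine step is this final structural dichotomy on proper $k$-algebraic subgroups; it is precisely the argument Caprace--Monod use in \cite[Proposition~2.8]{MR2574741} to deduce ordinary Borel density from their geometric density theorem.
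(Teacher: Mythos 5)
Your proof is correct and follows essentially the same route as the paper: transport the IRS through the isogeny to the adjoint group acting on the product of Bruhat--Tits buildings/symmetric spaces of the simple factors, apply Theorem~\ref{product case}, and conclude Zariski density via \cite[Proposition 2.8]{MR2574741} together with the descent remark for the isogeny. Your additional averaging step, addressing that $\overline{p}_*\mu$ is a priori only invariant under conjugation by $\overline{p}(G)$ rather than by all of $\overline{\mathbf{G}}(k)$, is a detail the paper leaves implicit.
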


\begin{proof}The group $\overline G=\overline{\mathbf{G}}(k)$ acts faithfully and geometrically densely on its Bruhat-Tits building (respectively symmetric space of non-compact type in the archimedean case) $X$ which a product of the (irreducible $\neq\R$) Bruhat Tits buildings (respectively symmetric space) of the groups factors $G_i$'s. The image in $\overline{G}$ of an IRS with non-trivial projections satisfies the hypothesis of Theorem \ref{product case}. Thus almost surely $H\in\mathcal{S}(\overline{G})$ acts geometrically densely and \cite[Proposition 2.8]{MR2574741} shows that $H$ is Zariski-dense.
\end{proof}

\begin{remark} Nevo-Stuck\&Zimmer theorem (formulated in terms of IRSs see \cite[Theorem 4.1]{Abert:2012qy}) shows that non-atomic irreducible IRSs of higher rank  in semi-simple Lie groups with property (T) and no center are irreducible lattices and thus are Zariski-dense (see merely \cite[Theorem 2.6]{Abert:2012qy} in the simple case). So Theorem \ref{Zariski density} is mostly useful for non-archimedean fields with valuation.
\end{remark}

\section{Actions of IRSs on CAT(0) spaces}

\subsection{Actions on CAT(0) spaces}
In this subsection, we recall the required facts about CAT(0) spaces. 

We fix a CAT(0) space $X$. A large part of what will follow is inspired by \cite{MR2574741,MR2574740}, and we refer to these references for details. We assume either that $X$ is proper  with finite-dimensional boundary, or that $X$ is of finite telescopic dimension (see \cite{MR2558883} for the definition of the telescopic dimension).
These geometric assumptions are used to guarantee the following fact: any  decreasing sequence of closed convex subsets has an non-empty intersection or defines a canonical point at infinity. We also assume $X$ to be separable. There is no loss of generality with this last assumption since a second countable group acting continuously by isometries on a CAT(0) space has a separable closed subset which is invariant (for example, the closed convex hull of some orbit).\\

We  use \emph{measurable fields of CAT(0) spaces} as a convenient tool to deal with measurability questions. We refer to \cite{MR3044451,MR3163023} for generalities about such fields. One does not need a deep knowledge of them to understand our proof. It suffices to have the following situation in mind. The group $G$  acts continuously by isometries on $X$. To each subgroup $H\in \SG$ we  associate a closed convex subset $X_H$. It will vary measurably with $H$ in sense that for any $x\in X$, the distance function $H\mapsto d(x,X_H)$ is measurable. The collection ${\bf X}=\{X_H\}_{H\in\SG}$ is called a \emph{subfield} of the constant field ${\bf X}_0$ with fiber $X$ over the measurable space $\SG$. The group $G$  acts on $\bf X$, meaning that  for almost all $H\in\SG$ and for all $g\in G$, $gX_H=X_{gHg^{-1}}$. We also say that $\bf X$ is $G$-invariant. In particular, $G$-invariance implies that for almost all $H\in\SG$, $X_H$ is a closed convex $H$-invariant subspace of $X$.


%
%
%
%
%
%
%
%
\subsection{Proof of Theorems \ref{IRSdense} \& \ref{product case}}

For the remainder of this subsection, we assume that $X$ has trivial Euclidean factor and that $G$ acts faithfully, continuously and geometrically densely by isometries on $X$. 

Let $(S,\mu)$ be a standard probability space with a p.m.p action of $G$. Let $x_0$ be an arbitrary point in $X$ and let $\mathcal{C}_0$ be the space of 1-Lipschitz convex functions vanishing at $x_0$.

\begin{proposition}\label{affine}Let $s\mapsto f_s$ be a map from $S$ to $\mathcal{C}_0$ such that for all $x\in X$, the map $s\mapsto f_s(x)$ is measurable and for all $x\in X,\ g\in G$ and almost all $s\in S$, $f_s(gx)=f_{g^{-1}s}(x)+f_s(gx_0)$. Then almost surely $f_s$ is constant. 
\end{proposition}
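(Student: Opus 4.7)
The plan is to average out the $s$-variable, use geometric density to force the average to vanish identically, then exploit pointwise convexity to show each $f_s$ is affine, and finally invoke the trivial Euclidean factor of $X$ to conclude $f_s\equiv 0$.

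First, define $\overline{f}(x) := \int_S f_s(x)\,d\mu(s)$; this is well-defined since $|f_s(x)| \le d(x,x_0)$, and inherits $1$-Lipschitz convexity with $\overline{f}(x_0)=0$, so $\overline{f}\in\mathcal{C}_0$. Using the cocycle-type hypothesis together with $G$-invariance of $\mu$, the change of variables $s\mapsto g^{-1}s$ in the definition of $\overline{f}(gx)$ yields $\overline{f}(gx)=\overline{f}(x)+\overline{f}(gx_0)$, so $c(g):=\overline{f}(gx_0)$ is a continuous homomorphism $G\to\R$. I would show $c\equiv 0$: otherwise the sub-level sets $\{\overline{f}\le t\}$ form a filtering family of non-empty closed convex subsets of $X$ (non-emptiness for small $t$ via $g^n x_0$ for suitable $n$) with empty intersection as $t\to-\infty$, so by the standing geometric hypothesis on $X$ they define a canonical point at infinity. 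Since $g\cdot\{\overline{f}\le t\}=\{\overline{f}\le t+c(g)\}$ merely re-parametrizes the family, this point is fixed by $G$, contradicting geometric density. Hence $\overline{f}$ is honestly $G$-invariant, and minimality of the $G$-action combined with $\overline{f}(x_0)=0$ forces $\overline{f}\equiv 0$.

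Now comes the key step. For any $x,y\in X$ and $t\in[0,1]$, letting $z_t$ denote the point at parameter $t$ on the geodesic from $x$ to $y$, convexity of $f_s$ gives $f_s(z_t)\le(1-t)f_s(x)+tf_s(y)$ pointwise, and integrating against $\mu$ while using $\overline{f}\equiv 0$ forces equality in the integral; since the integrand is non-negative, it must vanish, i.e.\ $f_s(z_t)=(1-t)f_s(x)+tf_s(y)$ for $\mu$-a.e.\ $s$. Ranging over a fixed countable dense $D\subset X$ (possible by separability) and rational parameters, a single $\mu$-conull set of $s\in S$ realizes all these affinity conditions simultaneously, and the continuity of $f_s$ (it is $1$-Lipschitz) promotes this to affinity of $f_s$ along every geodesic in $X$; in other words, $f_s$ is an affine function on $X$ in the CAT(0) sense.

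To conclude, I invoke the classical fact that a non-constant continuous affine function on a complete CAT(0) space induces an isometric splitting with a non-trivial $\R$-factor. The standing assumption that $X$ has trivial Euclidean factor therefore forces $f_s$ to be constant, and $f_s(x_0)=0$ yields $f_s\equiv 0$ for $\mu$-almost every $s$. The most delicate points are the canonical-point-at-infinity argument (showing $c=0$) and the careful null-set bookkeeping in the passage from pointwise-a.s.\ equality of the convexity inequality to a single $\mu$-conull set on which $f_s$ is affine on all of $X$; both are handled by standard CAT(0) machinery under the stated dimension hypothesis on $X$.
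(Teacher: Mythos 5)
Up to the last step your argument is essentially the paper's own proof, with only a cosmetic reordering: you define the average $\overline{f}$, derive the quasi-invariance $\overline{f}(gx)=\overline{f}(x)+\overline{f}(gx_0)$ from invariance of $\mu$, rule out unboundedness below via the canonical point at infinity attached to the filtering family of sublevel sets (the paper phrases this as ``$f$ attains its minimum'' and then kills the homomorphism $g\mapsto f(gx_0)$, while you kill $c$ first and then use minimality on the now-invariant sublevel sets; both variants work), and then upgrade ``$\overline{f}$ constant'' to ``$f_s$ affine for a.e.\ $s$'' by forcing equality in the convexity inequality over a countable dense set and rational parameters and using continuity. All of that matches the paper.

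The genuine gap is in your final step. The ``classical fact'' you invoke --- that a non-constant continuous affine function on a complete CAT(0) space induces a splitting with a non-trivial $\R$-factor --- is false at that level of generality: restrict the linear function $(x,y)\mapsto y$ to the convex set $\{(x,y)\in\R^2:\ y\geq x^2\}$ (or to a closed half-disc). This is a proper CAT(0) space of finite telescopic dimension with trivial Euclidean factor (it contains no line), yet it carries a non-constant $1$-Lipschitz affine function. Splitting theorems of this kind need extra structure, e.g.\ geodesic completeness, or --- as in the present setting --- the standing hypotheses on $X$ (finite telescopic dimension, respectively properness with finite-dimensional boundary, together with the ambient geometrically dense action); this is exactly the non-trivial input the paper takes from \cite[Proposition 4.8]{MR2558883} in the finite telescopic dimension case and \cite[Corollary 1.8]{MR2262730} in the proper case. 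So your strategy is the right one, but as written the conclusion ``affine $\Rightarrow$ constant'' rests on a false statement and must instead be justified by those results (or an argument exploiting the standing geometric assumptions); it is not CAT(0) folklore following from completeness alone.
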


\begin{proof}Define 
\begin{equation}\label{r}f(x)=\int_{S}f_s(x)\dd\mu(s).
\end{equation}
First observe that $|f_s(x)|\leq d(x_0,x)$ which shows that the right-hand side of \eqref{r} is well-defined. The function $f$ is a 1-Lipschitz convex function on $X$. It is moreover quasi-invariant: $f(gx)=f(x)+f(gx_0)$ for all $g\in G$, $x\in X$. 

 If $f$ does not achieve its minimum then  $\left(f^{-1}(-\infty,r])\right)_{r>\inf f}$ yields filtering family of closed convex subsets with empty intersection. Thus  \cite[Lemma 5.5]{MR2558883} or  \cite[Proposition 3.2]{MR2574740} yields a canonical point $\xi\in\partial X$ which is $G$-invariant since $gf^{-1}\left((-\infty,r]\right)=f^{-1}\left((-\infty,r+f(gx_0)]\right)$.
Thus $f$ achieves its minimum. Quasi-invariance implies that $g\mapsto f(gx_0)$ is a homomorphism, which is trivial since $f$ has a minimum. Thus $f$ is $G$-invariant and the set of points where this minimum is achieved is $G$-invariant. By minimality it is $X$. Thus $f$ is constant. Since $f_s$ is continuous, and $X$ is separable, this implies that almost all $f_s$ are affine but triviality of the Euclidean factor of $X$ and \cite[Proposition 4.8]{MR2558883} (for the finite telescopic dimension case) or  \cite[Corollary 1.8]{MR2262730} (for the proper case) imply almost all $f_s$ are constant.
\end{proof}

\begin{lemma}\label{convexfield}Assume there is a map $s\mapsto X_s$ from $S$ to the set of closed convex subsets of $X$ which is $G$-equivariant and such that for any $x\in X$, $s\mapsto d(x,X_s)$ is measurable. Then for almost every $s$, $X_s=X$.
\end{lemma}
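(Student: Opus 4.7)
The plan is to reduce Lemma \ref{convexfield} directly to Proposition \ref{affine} by extracting a quasi-invariant family of convex functions from the convex field $X_s$. The natural candidate is the renormalized distance
$$f_s(x)\;:=\;d(x,X_s)-d(x_0,X_s),$$
which requires $X_s$ to be nonempty (implicit in the statement, since otherwise $d(\cdot,X_s)\equiv+\infty$ would not be measurable as a real-valued function). I would first verify that $f_s\in\mathcal{C}_0$: the distance to a closed convex subset of a CAT(0) space is 1-Lipschitz and convex, and the normalization gives $f_s(x_0)=0$. Measurability of $s\mapsto f_s(x)$ for each fixed $x$ is immediate from the measurability hypothesis on $s\mapsto d(x,X_s)$.

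The core check is the quasi-invariance required by Proposition \ref{affine}. The $G$-equivariance of the field means that for each fixed $g\in G$ one has $g^{-1}X_s=X_{g^{-1}s}$ for almost every $s$. Because $g$ acts by isometries, this gives
$$d(gx,X_s)=d(x,X_{g^{-1}s})\qquad\text{and}\qquad d(gx_0,X_s)=d(x_0,X_{g^{-1}s}),$$
so a direct expansion produces
$$f_s(gx)=d(x,X_{g^{-1}s})-d(x_0,X_s)=f_{g^{-1}s}(x)+f_s(gx_0),$$
the cross term $d(x_0,X_{g^{-1}s})$ cancelling exactly. Proposition \ref{affine} therefore applies and yields that $f_s$ is almost surely constant on $X$.

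To finish, observe that if $f_s$ is constant on $X$, then $d(\cdot,X_s)$ is constant on $X$; taking any $y\in X_s$ shows this constant equals $d(y,X_s)=0$, so $x_0\in X_s$ and consequently $d(x,X_s)=0$ for every $x\in X$, which forces $X_s=X$. The only step that takes any real thought is the quasi-invariance computation, since one must carefully match the additive cocycle term $f_s(gx_0)$ against the shift of the index $s\mapsto g^{-1}s$; but this reduces to one line of bookkeeping, and everything else follows from Proposition \ref{affine} and the elementary geometry of the projection to a closed convex subset in a CAT(0) space.
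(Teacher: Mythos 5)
Your proof is correct and is exactly the paper's argument: apply Proposition \ref{affine} to $f_s(x)=d(x,X_s)-d(x_0,X_s)$, with the quasi-invariance and the final step (constant distance forces $X_s=X$) spelled out in the same way the paper leaves implicit.
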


\begin{proof}
It suffices to apply Proposition \ref{affine} to $f_s=d(x,X_s)-d(x_0,X_s)$.\end{proof}

\begin{lemma}\label{boundary map} There is no $G$-equivariant measurable map $S\to \partial X$.
\end{lemma}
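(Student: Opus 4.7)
The plan is to apply Proposition~\ref{affine} to the normalized Busemann functions associated to $\varphi$ and then derive a contradiction from the fact that Busemann functions are never constant. Assuming for contradiction that $\varphi\colon S\to\bd X$ is a $G$-equivariant measurable map, for each $\xi\in\bd X$ let $b_{\xi,x_0}\colon X\to\R$ denote the Busemann function at $\xi$ normalized by $b_{\xi,x_0}(x_0)=0$. This is a 1-Lipschitz convex function vanishing at $x_0$, so it lies in $\mathcal{C}_0$. Set $f_s:=b_{\varphi(s),x_0}$.

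The next step is to verify the two hypotheses of Proposition~\ref{affine}. Measurability of $s\mapsto f_s(x)$ reduces to Borel measurability of $\xi\mapsto b_{\xi,x_0}(x)$ on $\bd X$ (continuous in the cone topology in the proper case, and obtained as a pointwise limit along a measurable selection of ray representatives in general), combined with measurability of $\varphi$. For the cocycle condition, since each $g\in G$ is an isometry one has the standard identity for the Busemann cocycle, $b_\xi(gx)-b_\xi(gy)=b_{g^{-1}\xi}(x)-b_{g^{-1}\xi}(y)$; specialising to $y=x_0$ this rearranges to
\[
b_{\xi,x_0}(gx)=b_{g^{-1}\xi,x_0}(x)+b_{\xi,x_0}(gx_0).
\]
Substituting $\xi=\varphi(s)$ and using $G$-equivariance in the form $g^{-1}\varphi(s)=\varphi(g^{-1}s)$ yields exactly $f_s(gx)=f_{g^{-1}s}(x)+f_s(gx_0)$.

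Proposition~\ref{affine} then forces $f_s$ to be constant for $\mu$-almost every $s$. But $b_{\xi,x_0}$ is never constant: for any geodesic ray $\gamma$ representing $\xi$ one has $b_{\xi,x_0}(\gamma(t))-b_{\xi,x_0}(\gamma(0))=-t$, so $b_{\xi,x_0}$ is unbounded below along $\gamma$. This contradiction will establish the lemma. The only delicate point is the measurability of $\xi\mapsto b_{\xi,x_0}(x)$ in the general (non-proper) setting: this requires the Borel structure on $\bd X$ to be chosen compatibly with the notion of measurable map into $\bd X$ implicit in the statement, and then reduces to a routine selection-of-representatives argument.
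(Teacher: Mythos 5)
Your proposal is correct and follows the paper's own argument: the paper likewise takes $f_s$ to be the Busemann function at $\xi_s=\varphi(s)$ normalized to vanish at $x_0$, applies Proposition~\ref{affine}, and concludes from the nonexistence of constant Busemann functions. Your explicit verification of the cocycle identity and the measurability discussion simply fill in details the paper leaves implicit.
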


\begin{proof}Let $s\mapsto \xi_s$ be such a map. We denote by $f_s(x)$ the Busemmann function associated to $\xi_s$ vanishing at $x_0$. Proposition \ref{affine} implies that almost all $f_s$ are constant but constant Busemann functions do not exist.
\end{proof}

For $H\in\SG$ and $x\in X$, we denote by $\mathscr{C}_H(x)$ the closed convex hull of the $H$-orbit of $x$. A subset is $H$\emph{-minimal} if it is closed, convex, $H$-invariant and minimal among closed convex (non empty) $H$-invariant subsets. Recall that any two $H$-minimal subsets are parallel, in particular isometric to some subspace $M_H$ and the union of all such minimal subspaces $Z_H$ splits as a product $Z_H\simeq M_H\times T_H$ \cite[Theorem 4.3]{MR2574740}.

Let $C\subseteq X$ be a closed, convex and $H$-invariant subspace. It minimal if and only if $C=\mathscr{C}_H(x)$ for all $x\in C$. Or in other words, it is minimal if and only if for all $x,y\in C$, $d(x,\mathscr{C}_H(y))=0$. Thus, in order to recover, measurably, $H$-minimal subspaces, we define
\[\varphi_H(x)=\sup_{y\in\mathscr{C}_H(x)}d(x,\mathscr{C}_H(y)).\]
It follows that $x\in X$ belongs to an $H$-minimal subspace if and only if $\varphi_H(x)=0$. Moreover it will be shown in Lemma \ref{varphimin} that $X$ admits an $H$-minimal subspace if $\varphi_H(x)$ is finite for some $x \in X$ (equivalently for all $x \in X$). Actually, we will see that $d(x,Z_H) \leq \varphi_H(x)\leq 2d(x,Z_H)$. Thus $\varphi_H$ gives us a quantitative tool to express how far $\mathscr{C}_H(x)$ is from being $H$-minimal.

\begin{remark} We will not use this fact, but one may observe that $\varphi_H$ is actually a convex continuous function. Furthermore, in case $\mathscr{C}_H(x)$ contains a unique $H$-minimal subspace then $\varphi_H(x)= d(x,Z_H)$. This equality is not always satisfied, as the following example shows. Let $T_3$ be the regular trivalent tree and $X=T_3\times T_3$ considered as a CAT(0) square complex. Let $D$ be the diagonal of some square and $H$ be the pointwise stabilizer of $D$. For  $x \in X$, let $p(x)$ be its projection. One has $p(hx)=hp(x)$  for all $h\in H$. That is $p(hx)=p(x)$. Now if $x$ and $y$ have same projection on $D$, it does not mean that the midpoint $m$ of $[x,y]$ projects on $p(x)=p(y)$ as the following drawing of a small part of $X$ illustrates. 

\begin{figure}[h]
\definecolor{qqqqff}{rgb}{0,0,1}
\definecolor{zzttqq}{rgb}{0.6,0.2,0}
\begin{tikzpicture}[line cap=round,line join=round,>=triangle 45]
\clip(-2.09,-2.26) rectangle (4.15,4.07);
\fill[color=zzttqq,fill=zzttqq,fill opacity=0.1] (0,2) -- (2,0) -- (0,-2) -- (-2,0) -- cycle;
\fill[color=zzttqq,fill=zzttqq,fill opacity=0.1] (0,2) -- (2,4) -- (4,2) -- (2,0) -- cycle;
\fill[color=zzttqq,fill=zzttqq,fill opacity=0.1] (0,2) -- (1,4) -- (3,2) -- (2,0) -- cycle;
\draw [color=zzttqq] (0,2)-- (2,0);
\draw [color=zzttqq] (2,0)-- (0,-2);
\draw [color=zzttqq] (0,-2)-- (-2,0);
\draw [color=zzttqq] (-2,0)-- (0,2);
\draw [color=zzttqq] (0,2)-- (2,4);
\draw [color=zzttqq] (2,4)-- (4,2);
\draw [color=zzttqq] (4,2)-- (2,0);
\draw [color=zzttqq] (2,0)-- (0,2);
\draw [color=zzttqq] (0,2)-- (1,4);
\draw [color=zzttqq] (1,4)-- (3,2);
\draw [color=zzttqq] (3,2)-- (2,0);
\draw [color=zzttqq] (2,0)-- (0,2);
\draw (0,2)-- (0,-2);
\draw (0.66,1.34)-- (1.26,1.82);
\draw (1.26,1.82)-- (1.06,0.94);
\draw (0,0.94)-- (1.06,0.94);
\draw [dash pattern=on 3pt off 3pt] (0.66,1.34)-- (1.46,1.34);
\draw (0,1.34)-- (0.66,1.34);
\draw [dash pattern=on 3pt off 3pt] (1.06,0.94)-- (1.46,1.34);
\begin{scriptsize}
\fill [color=qqqqff] (0,1.34) circle (1.5pt);
\draw[color=qqqqff] (-0.29,1.22) node {$p(x)$};
\fill [color=qqqqff] (1.46,1.34) circle (1.5pt);
\draw[color=qqqqff] (1.66,1.52) node {$y$};
\fill [color=qqqqff] (1.26,1.82) circle (1.5pt);
\draw[color=qqqqff] (1.47,2.01) node {$x$};
\fill [color=qqqqff] (1.06,0.94) circle (1.5pt);
\draw[color=qqqqff] (1.52,0.77) node {$m$};
\fill [color=qqqqff] (0,0.94) circle (1.5pt);
\draw[color=qqqqff] (-0.3,0.66) node {$p(m)$};
\end{scriptsize}
\end{tikzpicture}
\end{figure}
\end{remark}

For any closed $Y,Z\subseteq X$ we denote by $D(Y,Z)$ the Hausdorff distance (possibly infinite) between $Y$ and $Z$.

\begin{lemma}\label{dist}For any $x,y\in X$, $D(\mathscr{C}_H(x),\mathscr{C}_H(y))\leq d(x,y)$.
\end{lemma}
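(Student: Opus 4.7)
The idea is to bootstrap from orbits to closed convex hulls, using the fact that closed $r$-neighborhoods of closed convex sets in a \cat space are themselves closed and convex.

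First, I would observe the trivial orbit-level estimate: since $H$ acts by isometries, $d(hx, hy) = d(x,y)$ for every $h \in H$, so every point of $Hx$ lies within distance $r := d(x,y)$ of a point of $Hy \subseteq \mathscr{C}_H(y)$, and vice versa. In other words, setting $N_r(A) := \{z \in X : d(z,A) \leq r\}$, we have $Hx \subseteq N_r(\mathscr{C}_H(y))$ and $Hy \subseteq N_r(\mathscr{C}_H(x))$.

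Next, the key step is to promote this inclusion from the orbit to its closed convex hull. Since $\mathscr{C}_H(y)$ is closed and convex, the distance function $z \mapsto d(z, \mathscr{C}_H(y))$ is a continuous convex function on $X$ (convexity of the distance to a convex set along geodesics is a standard feature of \cat geometry). Therefore its sublevel set $N_r(\mathscr{C}_H(y))$ is itself closed and convex. Containing $Hx$, it must contain the closed convex hull $\mathscr{C}_H(x)$, giving $\mathscr{C}_H(x) \subseteq N_r(\mathscr{C}_H(y))$. By the symmetric argument $\mathscr{C}_H(y) \subseteq N_r(\mathscr{C}_H(x))$, and the Hausdorff distance bound $D(\mathscr{C}_H(x), \mathscr{C}_H(y)) \leq r = d(x,y)$ follows at once.

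I do not expect any genuine obstacle here: the only non-tautological input is the convexity of the distance function to a closed convex subset of a \cat space, which is a standard fact. The argument is moreover clean about the possibility that $r = 0$ (in which case $\mathscr{C}_H(x) = \mathscr{C}_H(y)$) and does not require any of the dimensional hypotheses on $X$.
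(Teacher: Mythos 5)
Your proof is correct and follows essentially the same route as the paper: the orbit $Hy$ lies in the closed $d(x,y)$-neighborhood of $\mathscr{C}_H(x)$, this neighborhood is closed and convex (by convexity of the distance to a convex set), hence contains $\mathscr{C}_H(y)$, and one concludes by symmetry. The extra justification you give for the convexity of the neighborhood is exactly the standard fact the paper implicitly invokes.
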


\begin{proof} Since $d(hx,hy)=d(x,y)$ for any $h\in H$, the $H$-orbit of $y$ lies in the closed $d(x,y)$-neighborhood of $\mathscr{C}_H(x)$. This neighborhood is convex and thus contains $\mathscr{C}_H(y)$. Exchanging $x$ and $y$, one obtains the result.
\end{proof}

\begin{lemma}\label{varphimin} For any $H\in\SG$ the following are equivalent:
\begin{enumerate}[(i)]
\item  the subset $Z_H$ is not empty,
\item for any $x\in X$, $Z_H\cap\mathcal{C}_H(x)\neq\emptyset$,
\item there is $x\in X$ such that $\varphi_H(x)<\infty$,
\item for all $x\in X$,  $\varphi_H(x)<\infty$.
\end{enumerate}
\end{lemma}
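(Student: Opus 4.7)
The implications $(\mathrm{ii})\Rightarrow (\mathrm{i})$ and $(\mathrm{iv})\Rightarrow (\mathrm{iii})$ are tautological, so my plan is to establish the two nontrivial implications $(\mathrm{i})\Rightarrow (\mathrm{iv})$ and $(\mathrm{iii})\Rightarrow (\mathrm{i})$, the latter in a form that simultaneously yields $(\mathrm{iv})\Rightarrow (\mathrm{ii})$. The first is a projection computation producing the quantitative bound $\varphi_H(x)\leq 2\,d(x,Z_H)$ advertised in the preceding remark; the second is a Zorn's lemma extraction of an $H$-minimal subspace inside $\mathscr{C}_H(x)$, with the finiteness of $\varphi_H(x)$ supplying the uniform boundedness that activates the geometric dichotomy on $X$.

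For $(\mathrm{i})\Rightarrow (\mathrm{iv})$, fix $x\in X$, set $R=d(x,Z_H)$ and $z=\proj_{Z_H}(x)$. The product decomposition $Z_H\simeq M_H\times T_H$ of \cite[Theorem 4.3]{MR2574740} singles out the unique $H$-minimal subspace $M_0\subseteq Z_H$ containing $z$. The standard identity $\proj_{M_0}=\proj_{M_0}\circ \proj_{Z_H}$ for nested closed convex subsets gives $\proj_{M_0}(x)=z$ and hence $d(x,M_0)=R$. Since $M_0$ is closed convex and $H$-invariant, the function $d(\,\cdot\,,M_0)$ is convex and $H$-invariant; it is therefore constantly equal to $R$ on the orbit $Hx$ and bounded by $R$ on $\mathscr{C}_H(x)$ by convexity. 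For each $y\in \mathscr{C}_H(x)$, set $m_y=\proj_{M_0}(y)\in M_0$; one has $d(y,m_y)\leq R$, and by $H$-minimality $\mathscr{C}_H(m_y)=M_0$. Lemma~\ref{dist} produces $D(\mathscr{C}_H(y),M_0)\leq d(y,m_y)\leq R$, whence $d(z,\mathscr{C}_H(y))\leq R$, and the triangle inequality gives $d(x,\mathscr{C}_H(y))\leq 2R$. Taking the supremum over $y$ yields $\varphi_H(x)\leq 2\,d(x,Z_H)<\infty$.

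For $(\mathrm{iii})\Rightarrow (\mathrm{i})$ and simultaneously $(\mathrm{iv})\Rightarrow (\mathrm{ii})$, pick some $x\in X$ with $\varphi_H(x)<\infty$ and consider the poset $\mathcal{F}$ of nonempty closed convex $H$-invariant subsets of $\mathscr{C}_H(x)$ ordered by inclusion. For any chain $\{C_\alpha\}\subseteq \mathcal{F}$ and any choice $y_\alpha\in C_\alpha$, the $H$-invariance of $C_\alpha$ forces $\mathscr{C}_H(y_\alpha)\subseteq C_\alpha$, so $d(x,C_\alpha)\leq d(x,\mathscr{C}_H(y_\alpha))\leq \varphi_H(x)$ uniformly in $\alpha$. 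The geometric hypothesis on $X$ (\cite[Lemma 5.5]{MR2558883} in the finite telescopic dimension case, \cite[Proposition 3.2]{MR2574740} in the proper case) then forces $\bigcap_\alpha C_\alpha\neq\emptyset$, since the alternative of a canonical boundary point is precluded by the uniform distance bound. Zorn's lemma supplies a minimal element $M\in \mathcal{F}$; any nonempty closed convex $H$-invariant subset of $M$ belongs to $\mathcal{F}$ and equals $M$ by minimality, so $M$ is $H$-minimal in $X$. Therefore $M\subseteq Z_H\cap \mathscr{C}_H(x)$, establishing both that $Z_H\neq \emptyset$ and, when applied to an arbitrary $x$ under hypothesis $(\mathrm{iv})$, the full statement $(\mathrm{ii})$.

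The main obstacle is this last invocation of the geometric dichotomy: one needs to know that a filtering family of nonempty closed convex subsets of $X$ staying at uniformly bounded distance from a basepoint has nonempty intersection. In the proper case this follows from compactness of closed balls and extraction of a convergent subnet of nearest-point projections onto the $C_\alpha$; in the finite telescopic dimension case it depends on the Caprace--Lytchak structure theorem at infinity, and the uniform bound inherited from $\varphi_H(x)<\infty$ is exactly the condition that rules out the escape-to-infinity alternative that would otherwise produce an $H$-fixed canonical boundary point.
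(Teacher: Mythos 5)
Your proof follows essentially the same route as the paper's: for (i)$\Rightarrow$(iv) the bound $\varphi_H(x)\leq 2d(x,M_0)$ obtained from Lemma~\ref{dist} together with convexity and $H$-invariance of $d(\cdot,M_0)$, and for (iii)$\Rightarrow$(i) and (iv)$\Rightarrow$(ii) a Zorn argument on $H$-invariant closed convex subsets of $\mathscr{C}_H(x)$, all of which meet $\overline{B}(x,\varphi_H(x))$. Two points need repair, though neither is fatal. First, the ``standard identity'' $\proj_{M_0}=\proj_{M_0}\circ\proj_{Z_H}$ for nested closed convex sets is false in general (already in the Euclidean plane: project a point onto a disc and then onto a diameter, versus directly onto the diameter); fortunately you do not need it, since $z=\proj_{Z_H}(x)$ lies in $M_0\subseteq Z_H$, so $d(x,Z_H)\leq d(x,M_0)\leq d(x,z)=d(x,Z_H)$ gives $d(x,M_0)=R$ immediately, and that is all your convexity argument uses. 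Second, and more substantively, the intersection step is not justified as written in the finite telescopic dimension case: the dichotomy of \cite[Lemma 5.5]{MR2558883} (or \cite[Proposition 3.2]{MR2574740}) only says that an empty intersection produces a canonical point of $\bigcap_\alpha\partial C_\alpha$, and a uniform bound on $d(x,C_\alpha)$ does not by itself contradict the existence of such a boundary point, so you cannot simply declare that alternative ``precluded'' --- note also that at this stage nothing prevents $H$ from fixing a point at infinity, since the lemma concerns an arbitrary $H\in\SG$. The clean fixes are either to replace each $C_\alpha$ by $C_\alpha\cap\overline{B}(x,\varphi_H(x))$, still a filtering family of nonempty closed convex sets but now bounded, so that the boundary alternative is genuinely impossible because bounded sets have empty visual boundary, or to invoke, as the paper does, the Banach--Alaoglu-type theorem \cite[Theorem 14]{MR2219304}, valid in every complete CAT(0) space, which yields the nonempty intersection of bounded filtering families with no dimension or properness hypothesis at all. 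Your compactness argument in the proper case is fine as stated.
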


\begin{proof}Clearly (ii) implies (i) and (iv) implies (iii).

$\bf (i)\Rightarrow (iv)$. Assume $M$ is an $H$-minimal subset. Since $M=\mathcal{C}_H(m)$ for any $m\in M$, Lemma \ref{dist} implies that for all $x\in X$, $d(x,M)\geq d(y,M)$ for any $y\in\mathcal{C}_H(x)$. 

\begin{equation}\label{2times}
\varphi_H(x)=\sup_{y\in\mathscr{C}_H(x)}d(x,\mathscr{C}_H(y))\leq \sup_{y\in\mathscr{C}_H(x)} d(x,M)+D(M,\mathcal{C}_H(y))\leq2d(x,M).
\end{equation}

$\bf (iv)\Rightarrow (ii)$\&$\bf (iii)\Rightarrow (i)$. If $\varphi_H(x)<\infty$ for some $x\in X$ then any closed convex invariant subset  of $\mathcal{C}_H(x)$ intersects the closed ball $\overline{B}(x,\varphi_H(x))$. By the analog of Banach-Alaoglu theorem for CAT(0) spaces  \cite[Theorem 14]{MR2219304}, every filtering family of $H$-invariant closed convex subsets has a non-trivial intersection. Zorn's Lemma yields an $H$-minimal subset in $\mathcal{C}_H(x)$.
\end{proof}

\begin{lemma}Let $ x,y\in X$. The function $H\mapsto d(x,\mathscr{C}_H(y))$ is measurable. In particular ${\bf\mathscr{C}(x)}=\{\mathscr{C}_H(x)\}$ is a subfield.
\end{lemma}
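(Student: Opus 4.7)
The plan is to approximate the closed convex hull $\mathscr{C}_H(y)$ from inside by an increasing countable family of sets obtained by iterated midpoints of the $H$-orbit of $y$. Each approximation involves only finitely many group elements of $H$, so that its distance to $x$ is expressible as an infimum of a continuous function over $H^k$, and such infima over closed subgroups are automatically upper semi-continuous in the Chabauty topology.

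More concretely, let $m\colon X\times X\to X$ denote the midpoint map, which is continuous since $X$ is CAT(0). Define inductively $F_0=\Id$ and
$$F_{n+1}(z_1,\dots,z_{2^{n+1}})=m\bigl(F_n(z_1,\dots,z_{2^n}),\,F_n(z_{2^n+1},\dots,z_{2^{n+1}})\bigr),$$
so that each $F_n\colon X^{2^n}\to X$ is continuous. Setting $\ba^n(A)=F_n(A^{2^n})$ for $A\subseteq X$ and using the identity $m(a,a)=a$, one checks that the sequence $\bigl(\ba^n(A)\bigr)_n$ is increasing, its union is closed under taking midpoints, and its closure is the closed convex hull of $A$. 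Applied to $A=H\cdot y$, this yields
$$d\bigl(x,\mathscr{C}_H(y)\bigr)=\inf_n\,\inf_{(h_1,\dots,h_{2^n})\in H^{2^n}}d\bigl(x,F_n(h_1 y,\dots,h_{2^n} y)\bigr).$$

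The key measurability input is the following general fact: for any continuous $\phi\colon G^k\to\R$, the map $H\mapsto \inf_{(g_1,\dots,g_k)\in H^k}\phi(g_1,\dots,g_k)$ is upper semi-continuous on $\SG$. Indeed, if $H_j\to H$ in the Chabauty topology, then any $(h_1,\dots,h_k)\in H^k$ is a limit of tuples $(h_1^j,\dots,h_k^j)\in H_j^k$, so $\limsup_j \inf_{H_j^k}\phi\leq \phi(h_1,\dots,h_k)$, and taking the infimum over $H^k$ gives the inequality. Applied to each continuous function $\phi_n(g_1,\dots,g_{2^n})=d\bigl(x,F_n(g_1 y,\dots,g_{2^n}y)\bigr)$, this makes $H\mapsto \inf_{H^{2^n}}\phi_n$ Borel measurable, and a countable infimum of Borel functions is Borel.

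I expect the only genuinely non-trivial step to be verifying that $\overline{\bigcup_n \ba^n(H\cdot y)}$ coincides with $\mathscr{C}_H(y)$; this reduces to the standard fact that in a CAT(0) space the closure of a midpoint-closed set is convex, after checking via $m(a,a)=a$ that tuples of different lengths can be compared and that the union is indeed midpoint-closed. The remaining ingredients (continuity of the CAT(0) midpoint map and the Chabauty approximation property) are routine.
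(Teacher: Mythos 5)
Your proof is correct and follows essentially the same route as the paper: approximate $\mathscr{C}_H(y)$ by iterated midpoints of the orbit and use the Chabauty approximation of tuples in $H$ by tuples in $H_j$ to get semicontinuity, hence measurability. The only (harmless) difference in packaging is that you establish upper semicontinuity of each finite-level function $H\mapsto\inf_{H^{2^n}}\phi_n$ and then take a countable infimum, whereas the paper runs the same $\varepsilon$-argument once to show that $H\mapsto d(x,\mathscr{C}_H(y))$ itself is upper semicontinuous.
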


\begin{proof}It suffices actually to show that for any $x,y\in X$, the map $H\mapsto d(y,\mathscr{C}_H(x))$ is upper semicontinuous. Let us introduce some notations. For $x_1,x_2\in X$, let $m^1(x_1,x_2)$ be the midpoint of $x_1$ and $x_2$. We define by induction $m^{k}(x_1,\dots,x_{2^k})=m^1(m^{k-1}(x_1,\dots,x_{2^{k-1}}),m^{k-1}(x_{2^{k-1}+1},\dots,x_{2^k}))$. For any subset $A\subseteq X$, we denote by $C^k(A)$ the set $\{m^{k}(x_1,\dots,x_{2^k});\ x_1,\dots,x_{2^k}\in A\}$. Recall that the closed convex hull of $A$, $\overline{\Conv(A)}$ coincides with $\overline{\bigcup_{k\in\N}C^k(A)}$.

Assume $H_n\to H$ in $\SG$. Fix $\varepsilon>0$. There is $k\in\N$ such that $d(y,C^k(Hx))-d(y,\mathscr{C}_H(x))\leq\varepsilon/2$. Thus there are $h_1,\dots,h_{2^k}\in H$ such that $d(y,z)-d(y,\mathscr{C}_H(x))\leq\varepsilon/2$ where $z=m^{k}(h_1x,\dots,h_{2^k}x)$. For $n\in\N$, choose $h_i^n\in H_n$ converging to $h_i$ and set $z^n=m^{k}(h_1^nx,\dots,h_{2^k}^nx)$. Since $z^n\to z$, one has $\overline{\lim_{n\to\infty}}d(y,\mathscr{C}_{H_n}(x))\leq d(y,\mathscr{C}_H(x))+\varepsilon$ for all $\varepsilon>0$. This yields the desired semicontinuity.
\end{proof}

\begin{lemma}\label{varphi}For every $x\in X$, the function $H\mapsto\varphi_H(x)$ is measurable.
\end{lemma}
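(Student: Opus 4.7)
My plan is to express $\varphi_H(x)$ as a countable supremum of Borel functions of $H$. The key point is that although the domain $\mathscr{C}_H(x)$ varies with $H$, the integrand $y\mapsto d(x,\mathscr{C}_H(y))$ is $1$-Lipschitz in $y$ by Lemma \ref{dist}, so the supremum equals that over any dense subset of $\mathscr{C}_H(x)$. It therefore suffices to produce a countable dense subset $D_H\subseteq\mathscr{C}_H(x)$ whose elements depend Borel-measurably on $H$, and to verify joint measurability of $(H,y)\mapsto d(x,\mathscr{C}_H(y))$.

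For the measurable dense subset I would invoke the Kuratowski--Ryll-Nardzewski selection theorem on the standard Borel space $\SG$ (compact metrizable in the Chabauty topology) to obtain Borel maps $s_n\colon \SG\to G$ such that $\{s_n(H)\}_{n\in\N}$ is dense in $H$ for every $H$. Then $\{s_n(H)\cdot x\}_n$ is dense in the orbit $Hx$, and using the iterated midpoint operations $m^k$ introduced in the proof of the previous lemma, the countable set
\[
D_H := \bigcup_{k\in\N}\bigl\{m^k(s_{n_1}(H)x,\ldots,s_{n_{2^k}}(H)x) : n_1,\ldots,n_{2^k}\in\N\bigr\}
\]
is dense in $\bigcup_k C^k(Hx)$, hence in $\mathscr{C}_H(x)$. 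For each tuple $(k,n_1,\ldots,n_{2^k})$, the map $y_{k,\vec n}\colon H\mapsto m^k(s_{n_1}(H)x,\ldots,s_{n_{2^k}}(H)x)$ is Borel as a composition of the Borel selections $s_{n_j}$, the continuous $G$-action on $X$, and the continuous midpoint maps.

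To finish, I would note that $\Phi(H,y):=d(x,\mathscr{C}_H(y))$ is Borel in $H$ for each fixed $y$ (by the previous lemma) and $1$-Lipschitz in $y$ for each fixed $H$ (by Lemma \ref{dist}); such a Carathéodory function on the product of a standard Borel space with a separable metric space is jointly Borel, so $H\mapsto\Phi(H,y_{k,\vec n}(H))$ is Borel for each $(k,\vec n)$. Density of $D_H$ in $\mathscr{C}_H(x)$ together with continuity of $\Phi(H,\cdot)$ then gives
\[
\varphi_H(x)=\sup_{k,\vec n}\Phi\bigl(H,y_{k,\vec n}(H)\bigr),
\]
a countable supremum of Borel functions of $H$, hence Borel. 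The only real obstacle is the $H$-dependence of the supremum's domain; once the Borel selection of dense sequences in $H$ is in hand, the rest is routine measurable bookkeeping.
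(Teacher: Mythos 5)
Your argument is correct, but it takes a noticeably different (and heavier) route than the paper. The paper makes the supremum countable by staying inside the fixed ambient space: it takes a countable dense family $(x_n)$ of $X$ and uses that $y\mapsto d(x,\mathscr{C}_H(y))$ is $1$-Lipschitz (Lemma \ref{dist}) together with the previous lemma, so that $\varphi_H(x)$ is a countable supremum of functions of $H$ that are already known to be measurable -- no selection theorem and no joint-measurability argument is needed (as printed the paper's one-line formula should be read with the compensating term, e.g. $\varphi_H(x)=\sup_n\bigl[d(x,\mathscr{C}_H(x_n))-d(x_n,\mathscr{C}_H(x))\bigr]$, since the supremum defining $\varphi_H(x)$ ranges only over $y\in\mathscr{C}_H(x)$; both ingredients are measurable by the previous lemma). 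You instead manufacture an $H$-dependent countable dense subset of $\mathscr{C}_H(x)$ via a Kuratowski--Ryll-Nardzewski/Castaing representation of $H\mapsto H$ (legitimate: the Chabauty-Borel structure makes this multifunction weakly measurable with closed nonempty values in the Polish group $G$), push it through the orbit map and the iterated midpoints $m^k$, and then invoke joint measurability of the Carath\'eodory map $(H,y)\mapsto d(x,\mathscr{C}_H(y))$; each step is sound (continuity of the action and of $m^k$, separability of $X$, the previous lemma for measurability in $H$, Lemma \ref{dist} for continuity in $y$). What the paper's route buys is brevity and independence from selection machinery; what yours buys is transparency -- the index set of your supremum genuinely enumerates a dense subset of $\mathscr{C}_H(x)$, so no correction term or membership bookkeeping is needed -- at the cost of invoking two standard but nontrivial measurable-selection/measurability facts.
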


\begin{proof}Let $({ x}_n)$ be a dense countable family of $X$ then $\varphi_H(x)=\sup_{n}d(x_H,\mathscr{C}_H(x_n))$.
\end{proof}

\begin{lemma}\label{mini}Assume that for almost all $H\in\SG$, $H$ has a minimal invariant closed convex subset. For $H\in\SG$, let $Z_H$ be the union of closed convex minimal $H$-invariant subsets of $X_H$. Then ${\bf Z}=\{Z_H\}_{H\in\SG}$ is a $G$-invariant subfield of ${\bf X}_0$.
\end{lemma}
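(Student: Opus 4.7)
The plan is to verify two things: $G$-equivariance of $H \mapsto Z_H$, and measurability of $H \mapsto d(x, Z_H)$ for each fixed $x \in X$.

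$G$-equivariance is straightforward: if $M$ is a minimal $H$-invariant closed convex subset of $X_H$, then $gM$ is a minimal $gHg^{-1}$-invariant closed convex subset of $gX_H = X_{gHg^{-1}}$, and taking the union over all such $M$ yields $gZ_H = Z_{gHg^{-1}}$. Closedness of $Z_H$ follows from the product decomposition $Z_H \simeq M_H \times T_H$, which realizes $Z_H$ as a union of pairwise parallel copies of $M_H$, a standard closed convex subspace of $X$.

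For measurability, the idea is to reduce to the already-known measurability of $H \mapsto \varphi_H(\cdot)$ from Lemma \ref{varphi}. The bridge is the two-sided bound $d(x, Z_H) \leq \varphi_H(x) \leq 2 d(x, Z_H)$ extracted from Lemma \ref{varphimin}: the right inequality is exactly \eqref{2times}, and the left inequality comes from the step $\mathbf{(iii)} \Rightarrow \mathbf{(i)}$, where it is shown that any minimal $M \subseteq \mathscr{C}_H(x)$ meets $\overline{B}(x, \varphi_H(x))$. Fixing a countable dense subset $\{x_n\} \subseteq X$, the concrete formula to prove is
$$d(x, Z_H) = \inf_n \bigl(d(x, x_n) + \varphi_H(x_n)\bigr).$$
The inequality $\leq$ follows from $d(x, Z_H) \leq d(x, x_n) + d(x_n, Z_H) \leq d(x, x_n) + \varphi_H(x_n)$. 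For $\geq$, given $\varepsilon > 0$, pick $y \in Z_H$ with $d(x, y) \leq d(x, Z_H) + \varepsilon$ and $x_n$ in the dense set with $d(y, x_n) \leq \varepsilon$; then $\varphi_H(x_n) \leq 2 d(x_n, Z_H) \leq 2 d(x_n, y) \leq 2\varepsilon$, so $d(x, x_n) + \varphi_H(x_n) \leq d(x, Z_H) + 4\varepsilon$.

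Once this formula is in hand, $H \mapsto d(x, Z_H)$ is a countable infimum of measurable functions by Lemma \ref{varphi} and is therefore measurable. On the $\mu$-null set where $Z_H$ is empty we redefine $Z_H := \{x_0\}$ so that $\mathbf{Z}$ is a well-defined measurable subfield everywhere. The main conceptual step is setting up the approximation formula relating $d(x, Z_H)$ to the countably many measurable quantities $\varphi_H(x_n)$; everything else is immediate from the preceding lemmas.
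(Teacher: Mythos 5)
Your proof is correct and follows essentially the same route as the paper: both reduce the lemma to the measurability of $H\mapsto d(x,Z_H)$ and obtain it from the two-sided comparison $d(\cdot,Z_H)\leq\varphi_H(\cdot)\leq 2d(\cdot,Z_H)$ evaluated on a countable dense subset, using Lemma \ref{varphi}. Your approximation formula $d(x,Z_H)=\inf_n\bigl(d(x,x_n)+\varphi_H(x_n)\bigr)$ is just a slightly cleaner packaging of the paper's $\lim_{k\to\infty}\inf\{d(x,x_n):\varphi_H(x_n)<1/k\}$, and your explicit treatment of equivariance and of the null set where $Z_H$ is empty is fine.
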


\begin{proof}It suffices to show that for any $x\in X$, the function $H\mapsto d(x,Z_H)$ is measurable. Fix a countable dense subset $\{x_n\}$ of $X$. This measurability is a consequence of the fact that 
\[d(x,Z_H)=\lim_{k\to\infty}\inf\{d(x,x_n);\ \varphi_H(x_n)<1/k\}.\]

To prove this equality, choose $x_n$ such that $d(x_n,z)<1/2k$ where $z$ is the projection of $x$ on $Z_H$. Since $\varphi_H(x_n)\leq 2 d(x_n,Z_H)$ (equation \eqref{2times}) then $d(x,x_n)<1/k+d(x,Z_H)$ and $d(x,Z_H)\geq\lim_{k\to\infty}\inf\{d(x,x_n);\ \varphi(x_n)<1/k\}$. 

Now, for any $n\in \N$, if $\varphi(x_n)<1/k$ then $d(x_n,Z_H)<1/k$. Let $z$ be the projection of $x_n$ on $Z_H$. We have $d(x,Z_H)\leq d(x,x_n)+d(x_n,z)\leq d(x,x_n)+1/k$. This yields the reverse inequality.
\end{proof}

\begin{proof}[Proof of Theorem \ref{IRSdense}]Thanks to the ergodic decomposition, it suffices to deal with the ergodic case. Consider ${\bf X}_0$, the field of CAT(0) spaces with constant fiber $X$ over $(\SG,\mu)$ on which $G$ acts. We apply  \cite[Proposition 8.11]{MR3044451} in case $X$ has finite telescopic dimension or  \cite[Theorem 5.1]{MR3163023} in case $X$ is proper with Tits boundary of finite dimension\footnote{The result is stated for proper CAT(0) spaces of finite dimension but  the finite dimension assumption is used only for the boundary and not for the space itself.} and get either a $G$-equivariant map $\SG\to\partial X$ or a minimal $G$-invariant subfield $\mathbf{X}$. The first possibility is ruled out by Lemma \ref{boundary map}. Thus, we consider the second one. 

Choose a point $x$ of $X$. Since $\{H\in\SG,\ \varphi_H(x)=+\infty\}$ is $G$-invariant and $G\action\SG$ is ergodic the measurable function (Lemma \ref{varphi}) $H\mapsto\varphi_H(x)$ is essentially constant equal to $+\infty$ or almost surely $H$ has a minimal closed convex subset in $X_H$. If $\varphi_H(x)=+\infty$, the intersection of boundaries of convex $H$-invariant subsets of $X_H$ has a canonical center $\xi_H$, which yields an equivariant measurable map $\SG\to\partial X$ contradicting Lemma \ref{boundary map}. Hence almost surely $X_H\cap Z_H\neq\emptyset$.  Since the intersection of two subfields is still a subfield, Lemma \ref{mini} and the minimality assumption on $\mathbf{X}$ implies almost surely $X_H=X_H\cap Z_H$. Applying Lemma \ref{convexfield}, we get $X_H=X$ almost surely. Thus $X=M_H\times T_H$ and this product has to be trivial since $X$ is irreducible. That is $M_H$ or $T_H$ is reduced to a point. If $M_H$ is point then all points are $H$-invariant and since $G\action X$ is faithful, this means $H=\{e\}$ and $\mu=\delta_e$. This contradicts the fact that the IRS is not trivial. Thus $T_H$ is point and this means $H\action X$ is minimal.

It remains to show that almost surely $H$ has no fixed point at infinity. Consider the push-forward of the measure $\mu$ under the map $H\mapsto H'=\overline{[H,H]}$. It yields a new IRS and one can apply what we did above and in particular, $H'$ acts minimally. But if $H$ has a fixed point at infinity, $H'$ stabilizes any horoball centered at this point and this contradicts the minimality of the action of $H'$ on $X$. 
\end{proof}

\begin{proof}[Proof of Theorem \ref{product case}]
Decomposing $\SG$ in  ergodic components, we may assume that $\mu$ is ergodic.
We first prove that almost surely $H\in\SG$ does not fix a point in $\partial X$. Since $\partial X$ is the spherical join $\partial X_1*\dots*\partial X_n$, if $H$ fixes a point in $\partial X$ then there is $i$ such that $H_i$ fixes a point in $\partial X_i$ where $H_i$ is the image of $H$ under the projection $G\to G_i$. This contradicts Theorem \ref{IRSdense} applied to the IRS $\mu_i$ on $G_i$.

Let $Z_H$ be the closed convex hull of $H$-minimal subsets, which is almost surely not empty thanks to the previous paragraph.  We claim that $Z_H=X$ and we prove it by an induction on $n$. The case $n=1$ follows from Theorem  \ref{IRSdense}. Assume the result holds for $n-1\geq1$. Since $H_n\action X_n$ is minimal, the projection of $Z_H$ to $X_n$ is $X_n$ itself. Let $\widehat{X_n}$ be a notation for $X_1\times \dots\times X_{n-1}$. Fix $x\in X_n$ and denote by $Z_H^x$ be the fiber over $\{x\}$ under the projection $X\to X_n$. This is a closed convex non-empty subspace of $\widehat{X_n}$. Observe that for any $g\in G$, $gZ_H^x=Z_{gHg^{-1}}^{g_nx}$ where $g_n$ is the $n$-th coordinate of $g$. In particular for $g\in \widehat{G_n}=G_1\times\dots\times G_{n-1}$, $gZ_H^x=Z_{gHg^{-1}}^{x}$ and $H\mapsto Z_H^x$ is a $\widehat{G_n}$-equivariant map. As $\widehat{G_n}$ acts minimally on $\widehat{X_n}$, it follows from Lemma \ref{convexfield} that $Z_H^x=X_1\times\dots\times X_{n-1}$. Since it works for any $x\in \widehat{X_n}$, one has that $Z_H=\widehat{X_n}\times X_n$. That is $Z_H=X$. Now, thanks to the uniqueness of the decomposition of $X$ has a product  of irreducible spaces (\cite[Proposition 6.1]{MR2558883}\&\cite[Theorem 5.1]{MR2574740}) one has $Z_H=M_H\times T_H$ with $M_H=X_1\times \dots\times X_k$ and  $T_H=X_{k+1}\times\dots\times X_n$ after a possible reordering of the $X_i$'s, where $M_H$ is an $H$-minimal subset and the action of $H$ on $T_H$ is trivial. By ergodicity, the number $k$ and the permutation are independent of $H$. Now, by definition, $H$ acts trivially on $T_H$ thus on $X_{k+1}\times\dots\times X_n$. But our hypothesis that $\mu_i$ is not trivial and the faithfulness of  $G_{i}\action X_i$ imply that $k=n$. That is $H$ acts minimally.
\end{proof}

\textbf{Acknowledgements} Y.G. is greatfull to the hospitality of the math department at the University of Utah as well as support from Israel Science Foundation grant ISF 441/11 and U.S. NSF grants DMS 1107452, 1107263, 1107367 ``RNMS: Geometric structures And Representation varieties" (the GEAR Network).

B.D. is supported in part by Lorraine Region and Lorraine University.

%

\bibliographystyle{halpha}
\bibliography{biblio.bib}

\end{document}